\newtheorem{thm}{Theorem}[section]
\newtheorem{lem}[thm]{Lemma}
\newtheorem{exm}[thm]{Example}
\theoremstyle{definition}
\theoremstyle{remark}
\newtheorem{rem}[thm]{\bf Remark}
\numberwithin{equation}{section}
\begin{document}
\title[Wakamatsu's equivalence revisited]{Wakamatsu's equivalence revisited}
\author[Xiao-Wu Chen, Jiaqun Wei] {Xiao-Wu Chen, Jiaqun Wei$^*$}

\thanks{$^*$ The corresponding author}
\thanks{This work is supported by National Natural Science Foundation of China (Nos. 11522113, 11671245 and 11371196), and a project funded by the Priority Academic Program Development of Jiangsu Higher Education Institutions.}
\subjclass[2010]{18E30, 16G10, 18G25}
\date{\today}

\thanks{E-mail: xwchen$\symbol{64}$mail.ustc.edu.cn; weijiaqun@njnu.edu.cn}
\keywords{cotorsion pair, Wakamatsu-tilting module, stable equivalence, triangle functor}%

\maketitle

\dedicatory{}%
\commby{}%

\begin{abstract}
For a certain Wakamatsu-tilting bimodule over two artin algebras $A$ and $B$, Wakamatsu constructed an explicit equivalence between the stable module categories over  the trivial extension algebra of $A$ and that of $B$. We prove that Wakamatsu's functor is a triangle functor, thus a triangle equivalence.
\end{abstract}

\section{Introduction}

Let $A$ and $B$ be two artin algebras. Denote by $T(A)$ and $T(B)$ their trivial extension algebras.  For a certain Wakamatsu-tilting bimodule $_AT_B$, Wakamatsu constructed  in \cite{Wak2} an explicit equivalence between the stable module categories  $T(A)\mbox{-\underline{\rm mod}}$  and $T(B)\mbox{-\underline{\rm mod}}$.

 Wakamastu's construction is parallel to the one in \cite{TW1}, where the bimodule $_AT_B$ is assumed to have projective dimension at most one on both sides; also see \cite{TW2, Ta1}. The forerunners of  the work \cite{TW1} are  \cite{Ta}, \cite{AI} and \cite{Wak3}. On the other hand, if the bimodule $_AT_B$ is tilting of finite projective dimension, a triangle equivalence between these stable module categories was obtained in \cite{Ric}. Indeed, the result in \cite{Ric} is more general, which claims that such a triangle equivalence exists provided that $A$ and $B$ are derived equivalent.  However, the equivalence in \cite{Ric} is less explicit but with the advantage of being a triangle equivalence.

It is natural to ask whether Wakamatsu's equivalence is  a triangle equivalence. The aim of this paper is to answer this question affirmatively.

In Section 2, we recall basic facts on cotorsion pairs and $\partial$-functors. We prove that Wakamatsu's functor is a triangle functor in Section 3. In the last section, we recall the setting of \cite{Wak2},  where Wakamatsu's functor becomes a triangle equivalence.

We fix a commutative artinian ring $R$. Denote by $D={\rm Hom}_R(-, E)$ the Matlis duality, where $E$ is the minimal injective cogenerator of $R$. For an artin $R$-algebra $A$, we denote by $A\mbox{-mod}$ the category of finitely generated left $A$-modules. Any full subcategory of $A\mbox{-mod}$ is assumed to be closed under isomorphisms.  We identify right $A$-modules as left $A^{\rm op}$-modules, where $A^{\rm op}$ is the opposite algebra.

\section{Cotorsion pairs and $\partial$-functors}

In this section, we recall basic facts on cotorsion pairs and $\partial$-functors. We study special envelops of short exact sequences. The main references on cotorsion pairs are \cite{AR, GT}.

\subsection{Cotorsion pairs} Let $A$ be an artin $R$-algebra. Let $X={_AX}$ be an $A$-module. For a full subcategory $\mathcal{C}$  of $A\mbox{-mod}$, a $\mathcal{C}$-\emph{precover} of $X$ means a morphism $f\colon C\rightarrow X$ with $C\in \mathcal{C}$ such that any morphism $t\colon C'\rightarrow X$ with $C'\in \mathcal{C}$ factors through $f$, that is, $t=f\circ t'$ for some morphism $t'\colon C'\rightarrow C$. The subcategory $\mathcal{C}$ is said to be \emph{contravariantly finite}, if any module has a $\mathcal{C}$-precover. Dually, one has the notions of a $\mathcal{C}$-\emph{preenvelop} and a \emph{covariantly finite subcategory.}

We say that a subcategory $\mathcal{C}$ is \emph{finite}, if $\mathcal{C}={\rm add} Y$ for some module $_AY$. Here, ${\rm add}Y$ denotes the full subcategory formed by direct summands of finite direct sums of copies of $Y$. Observe that a finite subcategory is both contravariantly finite and covariantly finite.

Let $\mathcal{V}, \mathcal{W}$ be two full subcategories of $A\mbox{-mod}$. We denote by ${^{\perp_1}\mathcal{V}}$ the full subcategory formed by those modules $X$ satisfying ${\rm Ext}_A^1(X, V)=0$ for all $V\in \mathcal{V}$. Similarly, by ${^\perp\mathcal{V}}$ we mean the full subcategory formed by those modules $X$ satisfying ${\rm Ext}_A^i(X, V)=0$ for each $i\geq 1$ and $V\in \mathcal{V}$.

By a \emph{special $\mathcal{V}$-preenvelop} of an $A$-module  $X$, we mean a monomorphism $\alpha\colon X\rightarrow V$ with $V\in \mathcal{V}$ and its cokernel contained in ${^{\perp_1}\mathcal{V}}$. Then $\alpha$ is indeed a $\mathcal{V}$-preenvelop of $X$. This is obtained by applying ${\rm Hom}_A(-, V')$ to the exact sequence $0\rightarrow X\stackrel{\alpha}\rightarrow V\rightarrow {\rm Cok}\alpha\rightarrow 0$ for each $V'\in \mathcal{V}$. Dually, one has the notation $\mathcal{W}^{\perp_1}$, $\mathcal{W}^\perp$ and the notion of a \emph{special $\mathcal{W}$-precover}.

A \emph{cotorsion pair} $(\mathcal{W}, \mathcal{V})$ in $A\mbox{-mod}$ consists of two full subcategories satisfying $\mathcal{W}={^{\perp_1}\mathcal{V}}$ and $\mathcal{V}={\mathcal{W}^{\perp_1}}$, in which case, both $\mathcal{W}$ and $\mathcal{V}$ are closed under direct summands and extensions. A cotorsion pair $(\mathcal{W}, \mathcal{V})$ is \emph{complete} if every $A$-module  has a special $\mathcal{V}$-preenvelop, which is equivalent to the condition that each module has a special $\mathcal{W}$-precover; see \cite[Lemma 2.2.6]{GT}. A cotorsion pair $(\mathcal{W}, \mathcal{V})$ is \emph{hereditary} if ${\rm Ext}_A^i(W, V)=0$ for each $i\geq 1$, $W\in \mathcal{W}$ and $V\in \mathcal{V}$. In this case, we have $\mathcal{W}={^\perp \mathcal{V}}$ and $\mathcal{V}={\mathcal{W}^\perp}$.

The first part of the following result is due to \cite[Proposition 3.6]{AR}. We include a proof for completeness.

\begin{lem}\label{lem:sp}
Let $(\mathcal{W}, \mathcal{V})$ be a cotorsion pair which is complete and hereditary. Let $\xi\colon 0\rightarrow X_1\stackrel{f}\rightarrow X_2\stackrel{g}\rightarrow X_3\rightarrow 0$ be an exact sequence of modules. Take any special $\mathcal{V}$-preenvelop $\alpha_1$ and $\alpha_3$ of $X_1$ and $X_3$, respectively.  Then there is a commutative diagram with exact rows
\[\xymatrix{
\xi \colon & 0\ar[r] & X_1 \ar[d]^-{\alpha_1} \ar[r]^-{f} & X_2 \ar@{.>}[d]^-{\alpha_2} \ar[r]^-{g} & X_3 \ar[d]^-{\alpha_3}\ar[r] & 0 \\
\xi_V \colon  & 0\ar[r] & V_1 \ar@{.>}[r]^-{f_V} & V_2 \ar@{.>}[r]^-{g_V} & V_3\ar[r] & 0,
}\]
where $\alpha_2$ is a special $\mathcal{V}$-preenvelop of $X_2$. Moreover, given any morphism $t\colon V_1\rightarrow V$ in $\mathcal{V}$ satisfying $t\circ \alpha_1=0$, there exists a morphism $t'\colon V_2\rightarrow V$ satisfying $t=t'\circ f_V$ and $t'\circ \alpha_2=0$.
\end{lem}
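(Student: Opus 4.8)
The plan is to build the bottom row $\xi_V$ as a suitable extension of $V_3$ by $V_1$, exploiting the hereditary vanishing, and then to read off $\alpha_2$ from the resulting compatibility of extension classes. Throughout write $W_1={\rm Cok}\,\alpha_1$ and $W_3={\rm Cok}\,\alpha_3$; by definition of a special $\mathcal{V}$-preenvelop these lie in ${^{\perp_1}\mathcal{V}}=\mathcal{W}$, and since the pair is hereditary we have ${\rm Ext}_A^i(W_1,V)={\rm Ext}_A^i(W_3,V)=0$ for all $i\geq 1$ and all $V\in\mathcal{V}$. This single vanishing statement drives both halves of the argument.

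First I would construct $\xi_V$. Applying ${\rm Hom}_A(-,V_1)$ to $0\to X_3\xrightarrow{\alpha_3}V_3\to W_3\to 0$ and using ${\rm Ext}_A^1(W_3,V_1)={\rm Ext}_A^2(W_3,V_1)=0$, the map $\alpha_3^*\colon {\rm Ext}_A^1(V_3,V_1)\to{\rm Ext}_A^1(X_3,V_1)$ is an isomorphism; in particular it is surjective. Hence there is a class $[\xi_V]\in{\rm Ext}_A^1(V_3,V_1)$ with $\alpha_3^*[\xi_V]=(\alpha_1)_*[\xi]$; represent it by $\xi_V\colon 0\to V_1\xrightarrow{f_V}V_2\xrightarrow{g_V}V_3\to 0$, and note $V_2\in\mathcal{V}$ because $\mathcal{V}$ is closed under extensions. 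The equality $(\alpha_1)_*[\xi]=\alpha_3^*[\xi_V]$ says precisely that the pushout of $\xi$ along $\alpha_1$ agrees with the pullback of $\xi_V$ along $\alpha_3$, which is exactly the condition for completing a morphism of short exact sequences; it therefore yields the desired $\alpha_2\colon X_2\to V_2$ making the diagram commute. Concretely, one forms the pushout $P$ of $f$ and $\alpha_1$, obtains $0\to V_1\to P\to X_3\to 0$, and composes $X_2\to P$ with the map $P\to V_2$ supplied by the above identification of extensions. That $\alpha_2$ is itself a special $\mathcal{V}$-preenvelop follows from the snake lemma applied to the two rows: since $\alpha_1,\alpha_3$ are monic so is $\alpha_2$, and the cokernels fit into an exact sequence $0\to W_1\to{\rm Cok}\,\alpha_2\to W_3\to 0$, whence ${\rm Cok}\,\alpha_2\in\mathcal{W}$ as $\mathcal{W}$ is closed under extensions.

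For the \emph{Moreover} statement I would pass to the $3\times 3$ diagram obtained by adjoining the cokernel rows: with $\pi_1,\pi,\pi_3$ the cokernel maps onto $W_1,{\rm Cok}\,\alpha_2,W_3$, the bottom row reads $0\to W_1\xrightarrow{\bar f}{\rm Cok}\,\alpha_2\to W_3\to 0$ and $\pi\circ f_V=\bar f\circ\pi_1$. Given $t\colon V_1\to V$ in $\mathcal{V}$ with $t\circ\alpha_1=0$, the map $t$ factors as $t=\bar t\circ\pi_1$ through the cokernel of $\alpha_1$. Now I extend $\bar t\colon W_1\to V$ along the monomorphism $\bar f$: the obstruction lies in ${\rm Ext}_A^1(W_3,V)$, which vanishes by hereditariness, so there is $\overline{t'}\colon{\rm Cok}\,\alpha_2\to V$ with $\overline{t'}\circ\bar f=\bar t$. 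Setting $t'=\overline{t'}\circ\pi$ gives $t'\circ\alpha_2=0$ automatically and $t'\circ f_V=\overline{t'}\circ\bar f\circ\pi_1=\bar t\circ\pi_1=t$, as required.

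The only real subtlety is the first half: rather than attempting a naive horseshoe construction (which fails because $V_1$ need not be injective, so $\alpha_1$ does not extend along $f$ and the middle term need not split as $V_1\oplus V_3$), one must produce the bottom row from the Ext-group computation and extract $\alpha_2$ from the equivalence of extensions. Once the vanishing ${\rm Ext}_A^{\geq 1}(\mathcal{W},\mathcal{V})=0$ is in hand, both the construction of $\xi_V$ and the lifting in the \emph{Moreover} part become immediate, and the snake-lemma and $3\times 3$ bookkeeping is routine.
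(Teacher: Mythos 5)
Your proof is correct and follows essentially the same route as the paper: the paper also forms the pushout of $\xi$ along $\alpha_1$ and then lifts the resulting extension of $X_3$ by $V_1$ along $\alpha_3$ using ${\rm Ext}_A^2(W_3,V_1)=0$ (you phrase this as surjectivity of $\alpha_3^*$ on ${\rm Ext}^1(-,V_1)$, the paper as an explicit commutative exact diagram), with the specialness of $\alpha_2$ obtained from the extension $0\to W_1\to {\rm Cok}\,\alpha_2\to W_3\to 0$ in both cases. Your argument for the \emph{Moreover} statement --- factoring $t$ through $W_1$ and extending along $W_1\to{\rm Cok}\,\alpha_2$ via ${\rm Ext}_A^1(W_3,V)=0$ --- is exactly the paper's.
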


We might call the exact sequence $\xi_V$ a \emph{special $\mathcal{V}$-preenvelop} of $\xi$.

\begin{proof}
By a pushout of $\xi$ along $\alpha_1$, we have the following commutative exact diagram
\[\xymatrix{
0\ar[r] & X_1 \ar[d]^-{\alpha_1} \ar[r]^-{f} & X_2 \ar@{.>}[d]^-{a} \ar[r]^-{g} & X_3 \ar@{=}[d]\ar[r] & 0 \\
0\ar[r] & V_1 \ar@{.>}[r]^-{f'} & E \ar@{.>}[r]^-{g'} & X_3\ar[r] & 0
}\]
Then $a$ is a monomorphism with ${\rm Cok}a={\rm Cok}\alpha_1$. Consider the exact sequence $0\rightarrow X_3\stackrel{\alpha_3} \rightarrow V_3\rightarrow W_3\rightarrow 0$. Since $\alpha_3$ is a special $\mathcal{V}$-preenvelop of $X_3$,  its cokernel $W_3$ lies in $\mathcal{W}$. By ${\rm Ext}^2_A(W_3, V_1)=0$, we have the following commutative exact diagram
\[\xymatrix{
 & & 0\ar[d]& 0\ar[d]\\
0\ar[r] & V_1 \ar@{=}[d] \ar[r]^-{f'} & E \ar@{.>}[d]^-{a'} \ar[r]^-{g'} & X_3 \ar[d]^-{\alpha_3}\ar[r] & 0\\
0\ar[r] & V_1 \ar@{.>}[r] & V_2 \ar@{.>}[d] \ar@{.>}[r] & V_3\ar[d] \ar[r] & 0\\
        &                 & W_3 \ar[d] \ar@{=}[r] & W_3 \ar[d]     \\
         &                &  0 & 0
}\]
Then $V_2$ lies in $\mathcal{V}$. Put $\alpha_2=a'\circ a$, which is a special $\mathcal{V}$-preenvelop, since its cokernel lies in $\mathcal{W}$.

For the last statement, we consider the exact sequence  $0\rightarrow W_1 \stackrel{f_W}\rightarrow W_2\rightarrow W_3\rightarrow 0$ of the cokernels of $\alpha_i$'s. Then $t=\bar{t}\circ \pi_1$ for some morphism $\bar{t}\colon W_1\rightarrow V$, where $\pi_1\colon V_1\rightarrow W_1$ is the canonical projection. Since ${\rm Ext}_A^1(W_3, V)=0$, we infer that $\bar{t}$ factors through $f_W$, that is, $\bar{t}=t''\circ f_W$ for some morphism $t''\colon W_2\rightarrow V$. Set $t'=t''\circ \pi_2$ with $\pi_2\colon V_2\rightarrow W_2$ the canonical projection. Then we are done.
\end{proof}

The following result indicates that taking the  special $\mathcal{V}$-preenvelop of a  short exact sequence
 is partially functorial.

\begin{lem}\label{lem:com}
Let $(\mathcal{W}, \mathcal{V})$ be a cotorsion pair which is complete and hereditary. Assume that we are given the top of the following diagram, which is commutative with rows being short exact sequences. Consider their special $\mathcal{V}$-preenvelops as in the previous lemma. Here, the morphisms $\alpha_i\colon X_i\rightarrow V_i$ and $\alpha'_i\colon Y_i\rightarrow V'_i$ are the special $\mathcal{V}$-preenvelops. Then the dotted morphisms exist, which make the diagram commute.
\[\xymatrix@!=7pt{
& Y_1\ar[rr]^-{h} \ar[dd]&& Y_2\ar[rr]^-{k} \ar[dd]&& Y_3 \ar[dd] \\
 X_1 \ar[dd] \ar[ur]|{a}\ar[rr]^(0.42){f} && X_2 \ar[dd]\ar[ur]|{b}\ar[rr]^(0.42){g} && X_3 \ar[dd] \ar[ur]|{c}\\
&   V'_1\ar[rr]^(0.43){h_V} && V'_2\ar[rr]^(0.43){k_V} && V'_3 \\
  V_1 \ar@{.>}[ur]|{a_V}\ar[rr]^-{f_V} && V_2 \ar@{.>}[ur]|{b_V} \ar[rr]^-{g_V} && V_3 \ar@{.>}[ur]|{c_V}
}\]
\end{lem}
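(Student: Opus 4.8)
The plan is to build the three dotted maps separately: the two outer ones, $a_V$ and $c_V$, come directly from the universal property of the preenvelops, while the middle one, $b_V$, is produced by a two-step correction so as to be simultaneously compatible with $a_V$, $c_V$ and with the preenvelop $\alpha_2$. First, since $\alpha_1,\alpha_3$ are $\mathcal{V}$-preenvelops and $V'_1,V'_3\in\mathcal{V}$, the composites $\alpha'_1\circ a$ and $\alpha'_3\circ c$ factor through $\alpha_1,\alpha_3$, yielding $a_V\colon V_1\to V'_1$ and $c_V\colon V_3\to V'_3$ with $a_V\circ\alpha_1=\alpha'_1\circ a$ and $c_V\circ\alpha_3=\alpha'_3\circ c$; thus the two outer side faces commute. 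Likewise $\alpha'_2\circ b$ factors through $\alpha_2$, giving a first candidate $b_V^0\colon V_2\to V'_2$ with $b_V^0\circ\alpha_2=\alpha'_2\circ b$. What remains is to arrange the two squares of the bottom face, $b_V\circ f_V=h_V\circ a_V$ and $k_V\circ b_V=c_V\circ g_V$, without disturbing the relation $b_V\circ\alpha_2=\alpha'_2\circ b$.

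Next I would measure the two defects $e:=h_V\circ a_V-b_V^0\circ f_V\colon V_1\to V'_2$ and $d:=k_V\circ b_V^0-c_V\circ g_V\colon V_2\to V'_3$. Using the commutativity of the top face together with $g_V\circ f_V=0$ and $k_V\circ h_V=0$, a direct computation shows $e\circ\alpha_1=0$ and $d\circ\alpha_2=0$, so both defects descend to the cokernels $W_1,W_2\in\mathcal{W}$ of $\alpha_1,\alpha_2$. To repair the left-hand square I apply the last statement of Lemma \ref{lem:sp} with $t=e$: this produces $e'\colon V_2\to V'_2$ with $e=e'\circ f_V$ and $e'\circ\alpha_2=0$. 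Replacing $b_V^0$ by $b_V^1:=b_V^0+e'$ then gives $b_V^1\circ\alpha_2=\alpha'_2\circ b$ and $b_V^1\circ f_V=h_V\circ a_V$, so the middle side face and the left bottom square commute.

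Finally I would repair the right-hand square while preserving this. The new defect $d^1:=k_V\circ b_V^1-c_V\circ g_V$ satisfies $d^1\circ\alpha_2=0$ and, crucially, $d^1\circ f_V=0$ (because $k_V\circ h_V=0$ and $b_V^1\circ f_V=h_V\circ a_V$), so $d^1$ factors through $g_V$ and then through $W_3={\rm Cok}\,\alpha_3$, say $d^1=\hat d\circ\pi_3\circ g_V$ with $\hat d\colon W_3\to V'_3$ and $\pi_3\colon V_3\to W_3$ the projection. Since $W_3\in\mathcal{W}$, $V'_1\in\mathcal{V}$ and the cotorsion pair is hereditary, ${\rm Ext}^1_A(W_3,V'_1)=0$, so $-\hat d$ lifts along $k_V$ to some $\hat s\colon W_3\to V'_2$ with $k_V\circ\hat s=-\hat d$. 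Setting $b_V:=b_V^1+\hat s\circ\pi_3\circ g_V$, one checks that the correction vanishes after precomposition with $\alpha_2$ and with $f_V$ (because $\pi_3\circ g_V\circ\alpha_2=\pi_3\circ\alpha_3\circ g=0$ and $g_V\circ f_V=0$), so $b_V\circ\alpha_2=\alpha'_2\circ b$ and $b_V\circ f_V=h_V\circ a_V$ still hold, while now $k_V\circ b_V=c_V\circ g_V$ as well. All six faces of the cube then commute.

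The main obstacle is precisely the interdependence of the three maps: the preenvelop property by itself makes only the side faces commute, and an independent choice of $a_V,b_V,c_V$ will in general fail on the bottom face. The real content is the two-step correction of the middle map $b_V$, where at each stage one must verify that repairing one square does not spoil the commutativities already secured — this bookkeeping is exactly what is controlled by the hereditary vanishing ${\rm Ext}^1_A(W_3,V'_i)=0$ (equivalently, by the last statement of Lemma \ref{lem:sp}).
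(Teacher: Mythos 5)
Your proof is correct, and it overlaps with the paper's argument in its first half: both obtain $a_V$ and a first candidate for the middle map from the preenvelop property, observe that the defect $h_V\circ a_V-b_V^0\circ f_V$ kills $\alpha_1$, and repair it with the last statement of Lemma \ref{lem:sp}, exactly as in the paper. Where you diverge is in the treatment of $c_V$. The paper does \emph{not} choose $c_V$ from the preenvelop property of $\alpha_3$; instead, after correcting the middle map it \emph{defines} $c_V$ by the universal property of the cokernel $g_V$ (this is legitimate since $k_V\circ b_V\circ f_V=k_V\circ h_V\circ a_V=0$), so the right-hand bottom square commutes by construction, and the only thing left to check is $\alpha'_3\circ c=c_V\circ\alpha_3$, which follows from a chase using that $g$ is epi. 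You, by contrast, fix $c_V$ independently at the outset and therefore must perform a second correction of the middle map: factor the new defect through $g_V$ and then through $W_3={\rm Cok}\,\alpha_3$, and lift along $k_V$ using ${\rm Ext}^1_A(W_3,V'_1)=0$. This second correction is sound (all three required identities survive precomposition with $\alpha_2$ and $f_V$, as you verify), but it is work the paper avoids entirely by ordering the choices so that $c_V$ is determined last. Two minor remarks: your factorization of $\tilde d$ through $\pi_3$ implicitly uses that $g$ is epi (via $\tilde d\circ\alpha_3\circ g=d^1\circ\alpha_2=0$), which you should state; and the vanishing ${\rm Ext}^1_A(W_3,V'_1)=0$ needs only the cotorsion-pair axiom, not heredity --- heredity enters this circle of ideas only through the ${\rm Ext}^2$-vanishing inside Lemma \ref{lem:sp} itself.
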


\begin{proof}
By the special $\mathcal{V}$-preenvelop $\alpha_1$, we have a morphism $a_V\colon V_1\rightarrow V'_1$ satisfying $\alpha'_1\circ a=a_V\circ \alpha_1$. For the same reason, we have $b'_V\colon V_2\rightarrow V'_2$ satisfying $\alpha'_2\circ b=b'_V\circ \alpha_2$. But, in general, $h_V\circ a_V\neq b'_V\circ f_V$. By a diagram-chasing, we do have $(h_V\circ a_V-b'_V\circ f_V)\circ\alpha_1=0$. By Lemma \ref{lem:sp} there is a morphism $t'\colon V_2\rightarrow V'_2$ such that $t'\circ f_V=h_V\circ a_V-b'_V\circ f_V$ and $t'\circ \alpha_2=0$. Set $b_V=t'+b'_V$. Then there is a unique morphism $c_V$ such that $c_V\circ g_V=k_V\circ b_V$. By a diagram-chasing, we obtain $\alpha'_3\circ c=c_V\circ \alpha_3$. Then we are done.
\end{proof}

\subsection{Stable categories and $\partial$-functors}

Let $\mathcal{A}$ be an abelian category. Recall that it is a \emph{Frobenius category} provided that it has enough projectives and enough injectives such that the class of projective objects coincides with the class of injective objects. The \emph{stable categor}y $\underline{\mathcal{A}}$ modulo projectives is defined as follows: the objects are the same as $\mathcal{A}$; for two objects $X, Y$, the Hom group, denoted by ${\underline{\rm Hom}}_\mathcal{A}(X, Y)$, is defined to be the quotient group ${\rm Hom}_\mathcal{A}(X, Y)/P(X, Y)$, where $P(X, Y)$ denotes the subgroup formed by morphisms that factor through projectives; the composition of morphisms is induced from $\mathcal{A}$. For a morphism $f\colon X\rightarrow Y$ in $\mathcal{A}$, we denote by $\underline{f}\colon X\rightarrow Y$ the corresponding morphism in $\underline{\mathcal{A}}$.

For a Frobenius category $\mathcal{A}$, its stable category $\underline{\mathcal{A}}$ has a natural triangulated structure.
For the translation functor $\Sigma$, we \emph{fix} for each object $X$ an exact sequence $0\rightarrow X\stackrel{i_X} \rightarrow I(X) \stackrel{d_X}\rightarrow \Sigma(X)\rightarrow 0$ with $I(X)$ injective. Any exact sequence $0\rightarrow X\stackrel{f} \rightarrow Y\stackrel{g}\rightarrow Z\rightarrow 0$ in $\mathcal{A}$ yields an exact triangle $X\stackrel{\underline{f}} \rightarrow Y\stackrel{\underline{g}}\rightarrow Z\stackrel{\underline{\omega}}\rightarrow \Sigma(X)$, where $\omega$ is given by the following commutative diagram
\begin{align}\label{equ:3}
\xymatrix{
0 \ar[r] & X \ar@{=}[d] \ar[r]^-{f} & Y \ar@{.>}[d]\ar[r]^-{g} & Z \ar[r] \ar@{.>}[d]^-{\omega} & 0\\
0 \ar[r] & X \ar[r]^-{i_X} & I(X) \ar[r]^-{d_X} & \Sigma(X) \ar[r] & 0.}
\end{align}
Here, we use the injectivity of $I(X)$. The morphism $\omega$ is not unique, but its image $\underline{\omega}$ in $\underline{\mathcal{A}}$ is unique. In particular, for a selfinjective algebra $A$, its stable module category $A\mbox{-\underline{\rm mod}}$ becomes a triangulated category. For details, we refer to \cite[I.2]{Hap}.

Let $F\colon \mathcal{A}\rightarrow \mathcal{T}$ be an additive functor from an abelian category to a triangulated category. The translation functor on $\mathcal{T}$ is denoted by $\Sigma$.  Following \cite[Section 1]{Kel}, we say that $F$ is a \emph{$\partial$-functor} provided that for each short exact sequence $\xi\colon 0\rightarrow X\stackrel{f}\rightarrow Y\stackrel{g}\rightarrow Z\rightarrow 0$ in $\mathcal{A}$, there is a chosen morphism $\omega_\xi\colon F(Z)\rightarrow \Sigma(FX)$,  which fits into an exact triangle $F(X)\stackrel{F(f)}\rightarrow F(Y)\stackrel{F(g)}\rightarrow F(Z)\stackrel{\omega_\xi}\rightarrow \Sigma(FX)$. Moreover, the chosen morphism $\omega_\xi$ is functorial in $\xi$. More precisely, for each commutative exact diagram
\[\xymatrix{
\xi\colon & 0 \ar[r] & X \ar[d]^-{a}\ar[r]^-{f} & Y \ar[d]^-{b}\ar[r]^-{g} & Z \ar[d]^-{c}\ar[r] & 0\\
\xi'\colon & 0 \ar[r] & X'\ar[r]^-{f'} & Y' \ar[r]^-{g'} & Z'\ar[r] & 0,
}\]
there is a morphism between exact triangles
\[\xymatrix{
F(X) \ar[d]^-{F(a)}\ar[r]^-{F(f)} & F(Y) \ar[d]^-{F(b)}\ar[r]^-{F(g)} & F(Z) \ar[d]^-{F(c)}\ar[r]^-{\omega_\xi} & \Sigma(FX)\ar[d]^-{\Sigma(Fa)}\\
 F(X')\ar[r]^-{F(f')} & F(Y') \ar[r]^-{F(g')} & F(Z')\ar[r]^-{\omega_{\xi'}} & \Sigma(FX').
}\]
Indeed, it suffices to  verify that the rightmost square commutes. We observe that for a Frobenius category, the canonical functor $\mathcal{A}\rightarrow \underline{\mathcal{A}}$ is a $\partial$-functor.

The following fact is well known.

\begin{lem}{\rm (\cite[Lemma 2.5]{Chen})}\label{lem:par}
Let $\mathcal{A}$ be a Frobenius category and $F\colon \mathcal{A}\rightarrow \mathcal{T}$ be a $\partial$-functor which vanishes on projective objects. Then the induced functor $F\colon \underline{\mathcal{A}}\rightarrow \mathcal{T}$ is a triangle functor. \hfill $\square$
\end{lem}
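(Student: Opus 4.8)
The plan is to exhibit the induced functor $\underline{F}$ together with a natural isomorphism comparing the two translation functors, and then to check that standard triangles go to triangles; the whole argument rests on the single principle that $F$ kills injectives, combined with the functoriality built into the $\partial$-functor axiom. First I would descend $F$ to $\underline{\mathcal{A}}$. Since $F$ is additive and vanishes on projective objects, it annihilates every morphism that factors through a projective; hence $F$ factors uniquely through the quotient functor $\mathcal{A}\to\underline{\mathcal{A}}$, giving a well-defined additive functor $\underline{F}\colon\underline{\mathcal{A}}\to\mathcal{T}$ with $\underline{F}(X)=F(X)$ and $\underline{F}(\underline{f})=F(f)$.

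Next I would produce the comparison $\eta\colon \underline{F}\circ\Sigma\to\Sigma\circ\underline{F}$. For each object $X$, apply the $\partial$-functor to the fixed sequence $\xi_X\colon 0\to X\xrightarrow{i_X} I(X)\xrightarrow{d_X}\Sigma X\to 0$, obtaining an exact triangle $F(X)\to F(I(X))\to F(\Sigma X)\xrightarrow{\omega_{\xi_X}}\Sigma F(X)$ in $\mathcal{T}$. As $I(X)$ is injective, hence projective in the Frobenius category $\mathcal{A}$, we have $F(I(X))=0$; rotating this triangle shows that the connecting morphism $\omega_{\xi_X}$ is an isomorphism. I therefore set $\eta_X:=\omega_{\xi_X}\colon \underline{F}(\Sigma X)\xrightarrow{\sim}\Sigma\underline{F}(X)$.

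For naturality in a morphism $\underline{a}\colon X\to X'$, I would recall that $\Sigma\underline{a}$ is defined by lifting $a$ to a morphism of short exact sequences $\xi_X\to\xi_{X'}$, using injectivity of $I(X')$ to build $I(a)\colon I(X)\to I(X')$ and then passing to cokernels. Feeding this morphism of sequences into the functoriality of the $\partial$-functor yields a morphism of triangles whose rightmost square is exactly the naturality square $\eta_{X'}\circ\underline{F}(\Sigma\underline{a})=\Sigma\underline{F}(\underline{a})\circ\eta_X$. The non-uniqueness of $I(a)$ and $\Sigma a$ in $\mathcal{A}$ is harmless, since their images under $\underline{F}$ depend only on the class $\Sigma\underline{a}$ in $\underline{\mathcal{A}}$, on which $\underline{F}$ is well defined.

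Finally, to verify that $(\underline{F},\eta)$ is a triangle functor I would test it on a standard triangle $X\xrightarrow{\underline{f}}Y\xrightarrow{\underline{g}}Z\xrightarrow{\underline{\omega}}\Sigma X$ arising from a short exact sequence $\xi\colon 0\to X\xrightarrow{f}Y\xrightarrow{g}Z\to 0$, where $\omega$ is the morphism of diagram \eqref{equ:3}; this suffices, as every distinguished triangle is isomorphic to one of this form and triangle-preservation is stable under isomorphism. Now \eqref{equ:3} is itself a morphism of short exact sequences $\xi\to\xi_X$ with identity on $X$, so the \emph{same} functoriality of the $\partial$-functor gives $\eta_X\circ\underline{F}(\underline{\omega})=\omega_{\xi_X}\circ F(\omega)=\omega_\xi$. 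Hence the image sequence $F(X)\xrightarrow{F(f)}F(Y)\xrightarrow{F(g)}F(Z)\xrightarrow{\eta_X\circ\underline{F}(\underline{\omega})}\Sigma F(X)$ is literally the exact triangle attached to $\xi$ by the $\partial$-functor, and is therefore a triangle. \textbf{The main obstacle} I anticipate is the bookkeeping around $\eta$: one must keep straight how $\Sigma$ acts on morphisms of $\underline{\mathcal{A}}$ through non-unique lifts, and organize the two appeals to functoriality---one giving naturality of $\eta$, the other giving the identity $\eta_X\circ\underline{F}(\underline{\omega})=\omega_\xi$---so that they cohere.
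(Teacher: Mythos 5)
Your proof is correct and complete: constructing $\underline{F}$ by additivity, defining $\eta_X:=\omega_{\xi_X}$ from the triangle attached to the fixed sequence $0\to X\xrightarrow{i_X} I(X)\xrightarrow{d_X}\Sigma X\to 0$ (an isomorphism since $F(I(X))=0$), and then invoking the functoriality of the connecting morphism twice---once for naturality of $\eta$, once for the identity $\eta_X\circ F(\omega)=\omega_\xi$ coming from the morphism of sequences $\xi\to\xi_X$ in \eqref{equ:3}---is precisely the standard argument. The paper gives no proof of this lemma, citing \cite[Lemma 2.5]{Chen} instead, and your argument is essentially the one found there, so there is nothing to correct or add.
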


\section{Wakamatsu's functor}
In this section, we first recall from \cite{Wak2} the construction of Wakamatsu's functor. We will prove in Theorem \ref{thm:main} that it is a triangle functor.

\subsection{The construction} Let $A$ and $B$ be two artin $R$-algebras. Let $_AT_B$ be an $A$-$B$-bimodule, on which $R$ acts centrally.

We denote by $\varepsilon$ and $\eta$ the counit and unit of the adjoint pair $(T\otimes_B-, {\rm Hom}_A(T, -))$ on $A\mbox{-mod}$ and $B\mbox{-mod}$, respectively. More precisely, for each $A$-module $X$, the map $\varepsilon_X\colon T\otimes_B {\rm Hom}_A(T, X)\rightarrow X$ is defined by $\varepsilon_X(t\otimes f)=f(t)$; for each $B$-module $Y$, the map $\eta_Y\colon Y\rightarrow {\rm Hom}_A(T, T\otimes_BY)$ is given by $\eta_Y(y)(t)=t\otimes y$.

From now on, we assume that the $A$-$B$-bimodule $T$ is \emph{faithfully balanced}, that is, the structure maps $A\rightarrow {\rm End}_{B^{\rm op}}(T)$ and $B^{\rm op}\rightarrow {\rm End}_A(T)$ are both isomorphisms. In this case, we have two canonical bimodule isomorphisms
$$\delta\colon DT\otimes_A T\stackrel{\sim}\longrightarrow DB, \; {\rm and} \quad \delta'\colon T\otimes_B DT\stackrel{\sim}\longrightarrow DA,$$
which are given by $\delta(f\otimes t)(b)=f(tb)$ and $\delta'(t\otimes f)(a)=f(at)$. Here, $DT$ has the induced $B$-$A$-bimodule structure.

Recall that $T(A)=A\oplus DA$ is the trivial extension of $A$; it is a symmetric algebra, and thus selfinjecive. A $T(A)$-module is identified with a pair $(X, \phi)$, where $X$ is an $A$-module and the \emph{structure map} $\phi\colon DA\otimes X\rightarrow X$ is an $A$-module morphism satisfying $\phi \circ (DA\otimes \phi)=0$. We sometimes suppress $\phi$ and denote the pair by $X$. Similar notation applies to $T(B)$-modules.

For an $A$-module $_AV$, we consider the $B$-module
$$L(V)={\rm Hom}_A(T, V)\oplus (DT\otimes_AV),$$
 whose elements are viewed as column vectors. Then $L(V)$ becomes a $T(B)$-module via the structure map
$$\begin{pmatrix} 0 & 0\\
                * & 0\end{pmatrix}\colon DB\otimes_B L(V)\longrightarrow L(V),$$
                where $*$ is given by the composition $(DT\otimes \varepsilon_V)\circ (\delta^{-1}\otimes {\rm Hom}_A(T, V))$.  We observe that $L(T)$ is isomorphic to the regular module $T(B)$.

For a $T(A)$-module $X=(X, \phi)$, the $B$-module $DT\otimes_A X$ becomes a $T(B)$-module via the structure map $DB\otimes _B (DT\otimes_A X)\rightarrow DT\otimes_A X$, which is given by the composition
$$-(DT\otimes \phi)\circ (DT\otimes \delta'\otimes X) \circ (\delta^{-1}\otimes DT\otimes_A X).$$
Here, the minus sign is needed in the following construction.

We assume that $(\mathcal{W}, \mathcal{V})$ is a  complete cotorsion pair in $A\mbox{-mod}$ such that $\mathcal{W}\cap \mathcal{V}={\rm add} T$. For each $A$-module $X$, we \emph{fix} a special $\mathcal{V}$-preenvelop $\alpha_X\colon X\rightarrow V(X)$ once and for all.

We observe that $DT\otimes \alpha_X$ is always injective by
$${\rm Tor}_1^A(DT, {\rm Cok}\alpha_X)\simeq D{\rm Ext}^1_A({\rm Cok}\alpha_X, T)=0.$$
For a morphism $f\colon X\rightarrow X'$, there is a morphism $f_V\colon V(X)\rightarrow V(X')$ satisfying $\alpha_{X'}\circ f=f_V\circ \alpha_X$. Note that the morphism $f_V$ is not unique.

We recall from \cite[Section 1]{Wak2} the construction of Wakamatsu's functor $$\mathcal{S}\colon T(A)\mbox{-mod}\longrightarrow T(B)\mbox{-\underline{\rm mod}}.$$
 By \cite[Lemma 1.1]{Wak2} and \cite[Proposition 1.5]{TW1}, for each $T(A)$-module $X=(X, \phi)$, we have the following injective $T(B)$-module homomorphism
\begin{align}\label{equ:1}
\begin{pmatrix}
\Delta_X
\\
DT\otimes \alpha_X\end{pmatrix}\colon DT\otimes_A X \longrightarrow L(V(X))={\rm Hom}_A(T, V(X))\oplus (DT\otimes_AV(X)),
\end{align}
where $\Delta_X$ is given by the composition
$${\rm Hom}_A(T, -\alpha_X\circ \phi \circ (\delta'\otimes X)) \circ \eta_{(DT\otimes_AX)}.$$
We define $\mathcal{S}(X)$ to be the cokernel of this monomorphism. This notation is somehow sloppy, since $\mathcal{S}(X)$ depends on $(X, \phi)$, not just the underlying $A$-module $X$.

For a morphism $f\colon (X, \phi)\rightarrow (X', \phi')$ of $T(A)$-modules, we take any morphism $f_V\colon V(X)\rightarrow V(X')$ satisfying $\alpha_{X'}\circ f=f_V\circ \alpha_X$. Then the left square in the following diagram commutes.
\begin{align}\label{equ:2}
\xymatrix{
0\ar[r] &  DT\otimes_A X\ar[d]^-{DT\otimes f}\ar[r] & L(V(X)) \ar[d]^-{L(f_V)} \ar[r] & \mathcal{S}(X) \ar@{.>}[d]^-{\mathcal{S}(f)}\ar[r] & 0\\
0\ar[r] &  DT\otimes_A X'\ar[r] & L(V(X')) \ar[r] & \mathcal{S}(X') \ar[r] & 0  }
\end{align}
Then there is a unique morphism $\mathcal{S}(f)$ making the diagram commute. However, the morphism $\mathcal{S}(f)$ depends on the choice of $f_V$, but its image $\underline{\mathcal{S}(f)}$ in the stable category $T(A)\mbox{-\underline{mod}}$ is independent of the choice. This completes the construction of Wakamatsu's functor $\mathcal{S}$; see \cite[Lemma 1.2]{Wak2}. Since the functor $\mathcal{S}$ depends on the cotorsion pair $(\mathcal{W}, \mathcal{V})$, we will say that $\mathcal{S}$ is \emph{associated} to  $(\mathcal{W}, \mathcal{V})$.

In what follows, when we write $\mathcal{S}(f)$, we mean the corresponding morphism in $T(B)\mbox{-mod}$. We have to keep in mind that $\mathcal{S}(f)$ depends on the choice of $f_V$, not just $f$.

The following subtlety has to be clarified; compare the treatment in the third paragraph of \cite[p.19]{Hap}. Assume that we are given a special $\mathcal{V}$-preenvelop $\alpha'_X\colon X\rightarrow V'(X)$, which might not equal  the fixed $\alpha_X$. Replacing $V(X)$ by $V'(X)$ and $\alpha_X$ by $\alpha'_X$ in (\ref{equ:1}), we obtain the cokernel $\mathcal{S}'(X)$. Then we have a \emph{canonical isomorphism} in $T(B)\mbox{-\underline{\rm mod}}$
\begin{align}\label{equ:can}
{\rm can}\colon \mathcal{S}'(X)\stackrel{\sim}\longrightarrow \mathcal{S}(X).
\end{align}
Indeed, there is a morphism $s\colon V'(X)\rightarrow V(X)$ satisfying $\alpha_X=s\circ \alpha'_X$. Then a similar diagram as (\ref{equ:2}) defines the above isomorphism, which is independent of the choice of $s$. Consider the previous morphism $f\colon (X, \phi)\rightarrow (X', \phi')$. There is a morphism $f_{V'}\colon V'(X)\rightarrow V(X')$ satisfying $f_{V'}\circ\alpha'_X=\alpha_{X'}\circ f$. Then by replacing $f_V$ by $f_{V'}$ in (\ref{equ:2}), we obtain a morphism
\begin{align*}
\mathcal{S}'(f)\colon \mathcal{S}'(X)\longrightarrow \mathcal{S}(X'),
 \end{align*}
 which depends on the choice of $f_{V'}$. We observe the following fact
 \begin{align}\label{equ:4}
 \underline{\mathcal{S}'(f)}=\underline{\mathcal{S}(f)}\circ {\rm can}.
 \end{align}
 This fact enables us to abuse $\mathcal{S}'(X)$ with $S(X)$, $\underline{\mathcal{S}'(f)}$ with $\underline{\mathcal{S}(f)}$. The notation $\mathcal{S}'(f)$ also applies, if the range $X'$ of $f$ has taken a special $\mathcal{V}$-preenvelop, different from the fixed one.

\subsection{The $\partial$-functor}

The above recalled Wakamatsu's functor $\mathcal{S}$ vanishes on projective $T(A)$-modules; see \cite[Lemma 1.3]{Wak2}. Then it induces an additive functor from the stable module category of $T(A)$ to that of  $T(B)$. Our main result claims that the induced functor is a triangle functor, provided that the cotorsion pair $(\mathcal{W}, \mathcal{V})$ is in addition hereditary.

\begin{thm}\label{thm:main}
Let $_AT_B$ is a faithfully balanced $A$-$B$-bimodule. Assume that $(\mathcal{W}, \mathcal{V})$ is a complete hereditary cotorsion pair in $A\mbox{-{\rm mod}}$ satisfying $\mathcal{W}\cap \mathcal{V}={\rm add} T$.  Then Wakamatsu's functor $\mathcal{S}\colon T(A)\mbox{-{\rm mod}}\rightarrow T(B)\mbox{-\underline{\rm mod}}$ associated to  $(\mathcal{W}, \mathcal{V})$ is a $\partial$-functor. In particular, it induces a triangle functor $T(A)\mbox{-\underline{\rm mod}}\rightarrow T(B)\mbox{-\underline{\rm mod}}$.
\end{thm}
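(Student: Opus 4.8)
The plan is to check directly that $\mathcal{S}$ is a $\partial$-functor and then invoke Lemma \ref{lem:par}; since $\mathcal{S}$ vanishes on projective $T(A)$-modules (this is already known), the ``in particular'' clause follows at once. Both $T(A)\text{-mod}$ and $T(B)\text{-mod}$ are Frobenius categories, the trivial extensions being symmetric, so $T(B)\text{-}\underline{\mathrm{mod}}$ is triangulated and the canonical functor $T(B)\text{-mod}\to T(B)\text{-}\underline{\mathrm{mod}}$ is itself a $\partial$-functor. Hence it suffices to attach to each short exact sequence in $T(A)\text{-mod}$ a natural short exact sequence of $T(B)$-modules assembled from $\mathcal{S}$, and then transport it to a triangle through the canonical functor.

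Concretely, given $\xi\colon 0\to X\to Y\to Z\to 0$ in $T(A)\text{-mod}$, I would first apply Lemma \ref{lem:sp} to the underlying exact sequence of $A$-modules, with the fixed $\alpha_X,\alpha_Z$, to obtain a special $\mathcal{V}$-preenvelope $\xi_V\colon 0\to V(X)\to V(Y)\to V(Z)\to 0$ with $\alpha_Y$ special and all squares commuting. Stacking the three defining monomorphisms (\ref{equ:1}) over $\alpha_X,\alpha_Y,\alpha_Z$, with top row $DT\otimes_A\xi$ and middle row $L(\xi_V)$, produces a commutative diagram with exact columns
\[
0\to DT\otimes_A X_i\xrightarrow{\binom{\Delta_{X_i}}{DT\otimes\alpha_{X_i}}}L(V(X_i))\to\mathcal{S}(X_i)\to 0,\qquad X_1=X,\ X_2=Y,\ X_3=Z,
\]
whose bottom row is built from (representatives of) $\mathcal{S}(f)$ and $\mathcal{S}(g)$ as in (\ref{equ:2}). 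I would read this as a short exact sequence of three-term complexes and run the associated long exact homology sequence. The top and middle rows are exact except possibly at their left-hand terms, the defects being governed by $\operatorname{Tor}_1^A(DT,-)$.

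The decisive point, and the place where heredity is used, is that $DT\otimes\alpha_{X_i}$ induces an isomorphism $\operatorname{Tor}_1^A(DT,X_i)\xrightarrow{\sim}\operatorname{Tor}_1^A(DT,V(X_i))$. This follows from the cokernel sequence $0\to X_i\to V(X_i)\to W(X_i)\to 0$ with $W(X_i)\in\mathcal{W}$, together with $\operatorname{Tor}_j^A(DT,W(X_i))\simeq D\operatorname{Ext}_A^j(W(X_i),T)=0$ for $j=1,2$, which is exactly $\operatorname{Ext}_A^{\geq 1}(\mathcal{W},\mathcal{V})=0$ since $T\in\mathcal{V}$. Consequently the comparison map between the left-hand homologies of the top and middle rows is an isomorphism, so the long exact sequence forces the bottom row $0\to\mathcal{S}(X)\to\mathcal{S}(Y)\to\mathcal{S}(Z)\to 0$ to be exact in $T(B)\text{-mod}$; applying the canonical $\partial$-functor then yields an exact triangle $\mathcal{S}(X)\to\mathcal{S}(Y)\to\mathcal{S}(Z)\xrightarrow{\omega_\xi}\Sigma\mathcal{S}(X)$. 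I expect this exactness, namely disentangling the interaction of $DT\otimes_A-$, $L$, and the special preenvelope so that the $\operatorname{Tor}$-defects cancel, to be the main obstacle.

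It remains to show that $\omega_\xi$ is functorial in $\xi$. Given a morphism $(a,b,c)\colon\xi\to\xi'$ of short exact sequences, I would use Lemma \ref{lem:com} to choose the special $\mathcal{V}$-preenvelopes of $\xi$ and $\xi'$ compatibly, obtaining a morphism between the two diagrams above and hence a morphism of the resulting short exact sequences of $T(B)$-modules. Since the canonical functor $T(B)\text{-mod}\to T(B)\text{-}\underline{\mathrm{mod}}$ is a $\partial$-functor, this gives a morphism of triangles whose rightmost square is precisely the required naturality of $\omega_\xi$; here one invokes the canonical isomorphism (\ref{equ:can}) and the identity (\ref{equ:4}) to see that the vertical maps agree in the stable category with $\underline{\mathcal{S}(a)},\underline{\mathcal{S}(b)},\underline{\mathcal{S}(c)}$, independently of the auxiliary choices of the $f_V$'s. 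With functoriality in place, $\mathcal{S}$ is a $\partial$-functor vanishing on projectives, and Lemma \ref{lem:par} delivers the induced triangle functor $T(A)\text{-}\underline{\mathrm{mod}}\to T(B)\text{-}\underline{\mathrm{mod}}$.
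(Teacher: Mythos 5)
Your proposal is correct and follows essentially the same route as the paper's proof: Lemma \ref{lem:sp} to produce a special $\mathcal{V}$-preenvelope of $\xi$, the resulting $3\times 3$ diagram read as a short exact sequence of three-term complexes, transport through the canonical $\partial$-functor $T(B)\mbox{-mod}\rightarrow T(B)\mbox{-\underline{\rm mod}}$, Lemma \ref{lem:com} plus the identifications (\ref{equ:can}) and (\ref{equ:4}) for functoriality and independence of choices, and Lemma \ref{lem:par} to conclude. The only difference is cosmetic, in how exactness of $0\rightarrow\mathcal{S}(X)\rightarrow\mathcal{S}(Y)\rightarrow\mathcal{S}(Z)\rightarrow 0$ is verified: the paper shows $DT\otimes\alpha_{X_\bullet}$ is a quasi-isomorphism by proving the cokernel complex $DT\otimes W(X_\bullet)$ is acyclic (needing only ${\rm Tor}_1^A(DT,-)$-vanishing on $\mathcal{W}$), whereas you compare the ${\rm Tor}_1$-defects of the top and middle rows directly (needing ${\rm Tor}_1^A$ and ${\rm Tor}_2^A$ vanishing on $\mathcal{W}$); both rest on the same duality ${\rm Tor}_j^A(DT,-)\simeq D\,{\rm Ext}_A^j(-,T)$ and the heredity of the cotorsion pair.
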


\begin{proof}
The second statement follows from Lemma \ref{lem:par}.  For the first statement, we take an exact sequence $\xi\colon 0\rightarrow (X_1, \phi_1)\stackrel{f}\rightarrow (X_2, \phi_2) \stackrel{g}\rightarrow (X_3, \phi_3)\rightarrow 0$ in $T(A)\mbox{-mod}$. Recall the fixed special $\mathcal{V}$-preenvelops $\alpha_{X_1}\colon X_1\rightarrow V(X_1)$ and $\alpha_{X_3}\colon X_3\rightarrow V(X_3)$. Applying Lemma \ref{lem:sp}, we obtain the following commutative exact diagram
\[\xymatrix{
 0\ar[r] & X_1 \ar[d]^-{\alpha_{X_1}} \ar[r]^-{f} & X_2 \ar@{.>}[d]^-{\alpha'_{X_2}} \ar[r]^-{g} & X_3 \ar[d]^-{\alpha_{X_3}}\ar[r] & 0 \\
0\ar[r] & V(X_1) \ar@{.>}[r]^-{f_{V'}} & V'(X_2) \ar@{.>}[r]^-{g_{V'}} & V(X_3)\ar[r] & 0.
}\]
Here, $\alpha'_{X_2}$ is a special $\mathcal{V}$-preenvelop, which might not equal the fixed $\alpha_{X_2}$. For this reason, we use the notation $f_{V'}$ and $g_{V'}$, instead of $f_V$ and $g_V$.

We have the following commutative diagram in $T(B)\mbox{-mod}$ with exact rows.
\[\xymatrix{
0\ar[r] & DT\otimes_A X_1 \ar[d]^-{DT\otimes f}\ar[r] & L(V(X_1)) \ar[d]^-{L(f_{V'})}\ar[r] & \ar@{.>}[d]^-{\mathcal{S}'(f)}\mathcal{S}(X_1)\ar[r] & 0\\
0\ar[r] & DT\otimes_A X_2 \ar[d]^-{DT\otimes g}\ar[r] & L(V'(X_2)) \ar[d]^-{L(g_{V'})} \ar[r] & \mathcal{S}'(X_2)  \ar@{.>}[d]^-{\mathcal{S}'(g)} \ar[r] & 0\\
0\ar[r] & DT\otimes_A X_3 \ar[r] & L(V(X_3)) \ar[r] & \mathcal{S}(X_3)\ar[r] & 0
}\]
Here, for the notation $\mathcal{S}'(f)$ and $\mathcal{S}'(g)$, we refer to the last paragraph in the previous subsection.

We claim that the sequence $0\rightarrow \mathcal{S}(X_1)\stackrel{\mathcal{S}'(f)}\rightarrow \mathcal{S}'(X_2)\stackrel{\mathcal{S}'(g)}\rightarrow \mathcal{S}(X_3)\rightarrow 0$ is exact. For the claim, we view the columns in the above diagram as complexes. The middle complex is written as ${\rm Hom}_A(T, V(X_\bullet))\oplus (DT\otimes_A V(X_\bullet))$. Since ${\rm Ext}_A^1(T, V(X_1))=0$, the subcomplex ${\rm Hom}_A(T, V(X_\bullet))$ is acyclic. The claim is equivalent to the fact that the following monomorphism
$$DT\otimes \alpha_{X_\bullet}\colon DT\otimes_A X_\bullet \longrightarrow DT\otimes_AV(X_\bullet)$$
is a quasi-isomorphism. However, the cokernel of $DT\otimes \alpha_{X_\bullet}$ is isomorphic to $DT\otimes W(X_\bullet)$, where each $W(X_i)$ is the cokernel of $\alpha_{X_i}$, respectively. Here, we abuse $W(X_2)$ with $W'(X_2)$, the cokernel of $\alpha'_{X_2}$. The cokernels $W(X_i)$ belong to $\mathcal{W}$ and the complex $W(X_\bullet)$ is acyclic. It follows that the complex $DT\otimes W(X_\bullet)$ is also cyclic, since ${\rm Tor}^A_1(DT, W(X_3))\simeq D{\rm Ext}_A^1(W(X_3), T)=0$. From this, we infer that $DT\otimes \alpha_{X_\bullet}$ is a quasi-isomorphism. We are done with the claim.

Thanks to the claim and (\ref{equ:3}), we have an exact triangle in $T(B)\mbox{-\underline{mod}}$
$$\mathcal{S}(X_1)\xrightarrow{\underline{\mathcal{S}'(f)}} \mathcal{S}'(X_2)\xrightarrow{\underline{\mathcal{S}'(g)}} \mathcal{S}(X_3)\stackrel{\omega_\xi}\longrightarrow \Sigma(\mathcal{S}X_1).$$
Identifying $\mathcal{S}'(X_2)$ with $\mathcal{S}(X_2)$ via the canonical isomorphism (\ref{equ:can}) and using (\ref{equ:4}), we obtain the desired triangle
$$\mathcal{S}(X_1)\xrightarrow{\underline{\mathcal{S}(f)}} \mathcal{S}(X_2)\xrightarrow{\underline{\mathcal{S}(g)}} \mathcal{S}(X_3)\stackrel{\omega_\xi}\longrightarrow \Sigma(\mathcal{S}X_1).$$

It remains to show that $\omega_\xi$ is functorial in $\xi$. Before doing this, we notice that $\omega_{\xi}$ seems to depend on our choice of $\alpha'_{X_2}$, $f_{V'}$ and $g_{V'}$. We claim that $\omega_\xi$ is actually independent of the choice. This will be proved along with the functorial property of $\omega_{\xi}$.

We assume that there is a commutative diagram in $T(A)\mbox{-mod}$ with exact rows
\[\xymatrix{
\xi\colon & 0 \ar[r] & (X_1, \phi_1) \ar[d]^-{a}\ar[r]^-{f} & (X_2, \phi_2) \ar[d]^-{b}\ar[r]^-{g}\ar[r] & (X_3, \phi_3) \ar[d]^-{c}\ar[r] &  0\\
\xi'\colon & 0 \ar[r] & (Y_1, \psi_1) \ar[r]^-{h} & (Y_2, \psi_2) \ar[r]^-{k} & (Y_3, \psi_3) \ar[r] & 0.
}\]
For $\xi'$, we have the following commutative diagram
\[\xymatrix{
 0\ar[r] & Y_1 \ar[d]^-{\alpha_{Y_1}} \ar[r]^-{h} & Y_2 \ar@{.>}[d]^-{\alpha'_{Y_2}} \ar[r]^-{k} & Y_3 \ar[d]^-{\alpha_{Y_3}}\ar[r] & 0 \\
0\ar[r] & V(Y_1) \ar@{.>}[r]^-{h_{V'}} & V'(Y_2) \ar@{.>}[r]^-{k_{V'}} & V(Y_3)\ar[r] & 0,
}\]
which yields an exact sequence $0\rightarrow \mathcal{S}(Y_1)\stackrel{\mathcal{S}'(h)}\rightarrow \mathcal{S}'(Y_2)\stackrel{\mathcal{S}'(k)}\rightarrow \mathcal{S}(y_3)\rightarrow 0$ of $T(B)$-modules. We apply Lemma \ref{lem:com} to obtain the relevant morphisms $a_V\colon V(X_1)\rightarrow V(Y_1)$, $b_{V'}\colon V'(X_2)\rightarrow V'(Y_2)$ and $c_V\colon V(X_3)\rightarrow V(Y_3)$, which make the diagram commute. Then we obtain a commutative diagram between two $3\times 3$ diagrams in $T(B)\mbox{-mod}$, which yields a commutative exact diagram
\[\xymatrix{
0\ar[r] & \mathcal{S}(X_1) \ar@{.>}[d]^-{\mathcal{S}(a)} \ar[r]^-{\mathcal{S}'(f)} &  \mathcal{S}'(X_2)\ar@{.>}[d]^-{\mathcal{S}'(b)} \ar[r]^-{\mathcal{S}'(g)} &  \mathcal{S}(X_3) \ar@{.>}[d]^-{\mathcal{S}(c)}\ar[r] & 0\\
0\ar[r] & \mathcal{S}(Y_1) \ar[r]^-{\mathcal{S}'(h)} &  \mathcal{S}'(Y_2) \ar[r]^-{\mathcal{S}'(k)} &  \mathcal{S}(Y_3) \ar[r] & 0.
}\]
Recall that the canonical functor $T(B)\mbox{-mod}\rightarrow T(B)\mbox{-\underline{\rm mod}}$ is a $\partial$-functor. In particular, the above diagram implies a morphism between exact triangles in $T(B)\mbox{-\underline{\rm mod}}$. Then we have
$$\Sigma (\underline{\mathcal{S}a})\circ \omega_\xi=\omega_{\xi'}\circ \underline{\mathcal{S}(c)}.$$
This proves the functorialness of $\omega_\xi$. In particular, if all of $a$, $b$ and $c$ are  the identity maps, this proves that $\omega_\xi$ is independent of our choice.
\end{proof}

\section{Wakamatsu-tilting bimodules}

In this section, we recall from \cite{Wak1, MR} basic facts on Wakamatsu-tilting bimodules. For a certain Wakamatsu-tilting bimodule, Wakamatsu's functor in Theorem \ref{thm:main} can be defined and becomes a triangle equivalence.

Let $_AT$ be an $A$-module satisfying ${\rm Ext}_A^i(T, T)=0$ for each $i\geq 1$. Write $T^\perp$ for the full subcategory consisting of those modules $X$ satisfying ${\rm Ext}_A^i(T, X)=0$ for each $i\geq 1$. Set $_T\mathcal{X}$ to be the full subcategory formed by those modules $X$, which admit a long exact sequence $\cdots\rightarrow T^{-2} \stackrel{d^{-2}}\rightarrow T^{-1}\stackrel{d^{-1}}\rightarrow T^0\rightarrow X\rightarrow 0 $ with each $T^{-i}\in {\rm add} T$ and each cokernel ${\rm Cok}d^{-i}\in T^\perp$. In particular, $_T\mathcal{X}\subseteq T^\perp$. Recall from \cite[Proposition 5.1]{AR} that $_T\mathcal{X}$ is closed under extensions, cokernels of monomorphisms and direct summands. Similarly, we have the subcategories $\mathcal{X}_T\subseteq {^\perp T}$.

Let $_AT_B$ be an $A$-$B$-bimodule. We say that $_AT_B$ is a \emph{Wakamatsu-tilting bimodule} provided that it is faithfully balanced satisfying ${\rm Ext}_A^i(T, T)=0={\rm Ext}^i_{B^{\rm op}}(T, T)$ for each $i\geq 1$. An $A$-module $_AT$ is a \emph{Wakamatsu-tilting module} if the natural bimodule $_AT_B$ is Wakamatsu-tilting with $B={\rm End}_A(T)^{\rm op}$. In this case, the dual bimodule $_B(DT)_A$ is also Wakamatsu-tilting, and thus the $B$-module $_B(DT)$ is Wakamatsu-tilting.

We collect known facts on Wakamatsu-tilting bimodules in the following lemma.

\begin{lem}\label{lem:Wak}
Let $_AT_B$ be a Wakamatsu-tilting bimodule. Then the following statements hold.
\begin{enumerate}
\item $(^\perp({_T\mathcal{X}}), {_T\mathcal{X}})$ and $(\mathcal{X}_T, (\mathcal{X}_T)^\perp)$ are both hereditary cotorsion pairs in $A\mbox{-{\rm mod}}$; moreover, ${^\perp({_T\mathcal{X}})}\cap {_T\mathcal{X}}={\rm add}T= \mathcal{X}_T\cap (\mathcal{X}_T)^\perp$ and $(\mathcal{X}_T)^\perp\subseteq {_T\mathcal{X}}$.
\item $(\mathcal{X}_{DT}, (\mathcal{X}_{DT})^\perp)$ and $(^\perp(_{DT}\mathcal{X}), {_{DT}\mathcal{X}})$  are both hereditary cotorsion pairs in $B\mbox{-{\rm mod}}$; moreover, $\mathcal{X}_{DT}\cap (\mathcal{X}_{DT})^\perp={\rm add} DT= {^\perp(_{DT}\mathcal{X})}\cap {_{DT}\mathcal{X}}$ and $^\perp(_{DT}\mathcal{X})\subseteq {\mathcal{X}_{DT}}$.
    \item There are equivalences between these subcategories given by the Hom and tensor functors.
    \[\xymatrix{
    {_T\mathcal{X}} \ar@<+.5ex>[d]^-{{\rm Hom}_A(T, -)}& \supseteq  &(\mathcal{X}_T)^\perp \ar@<+.5ex>[d]^-{{\rm Hom}_A(T, -);};  & &    \mathcal{X}_T \ar@<-.5ex>[d]_{DT\otimes_A-} & \supseteq  & {^\perp({_T\mathcal{X}})} \ar@<-.5ex>[d]_{DT\otimes_A-}\\
    \mathcal{X}_{DT} \ar@<+.5ex>[u]^-{T\otimes_B-} & \supseteq &  {^\perp({_{DT}\mathcal{X}})} \ar@<+.5ex>[u]^-{T\otimes_B-};   &  &  {_{DT}\mathcal{X}} \ar@<-.5ex>[u]_{{\rm Hom}_B(DT, -)} & \supseteq & (\mathcal{X}_{DT})^\perp\ar@<-.5ex>[u]_{{\rm Hom}_B(DT, -)}
    }\]
\end{enumerate}
\end{lem}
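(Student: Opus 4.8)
The plan is to prove (1) in full, then obtain (2) for free and (3) from the tilting adjunction. Since the bimodule $_B(DT)_A$ is again Wakamatsu-tilting, statement (2) is precisely statement (1) applied to the left $B$-module $_B(DT)$; so I would prove (1) together with the half of (3) relating the $A$-side classes to the $B$-side classes, and then read off (2) and the remaining half of (3) by this symmetry. All the ingredients are essentially in \cite{AR}, \cite{Wak1} and \cite{MR}; the work is to assemble them in the present notation.

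For the cotorsion pairs in (1), note first that completeness is \emph{not} asserted, so it suffices to check the two orthogonality equalities defining a cotorsion pair together with heredity, and the closure properties of ${}_T\mathcal{X}$ and $\mathcal{X}_T$ from \cite[Proposition 5.1]{AR} are the main input. For $({}^{\perp}({}_T\mathcal{X}), {}_T\mathcal{X})$ the inclusion ${}^{\perp}({}_T\mathcal{X}) \subseteq {}^{\perp_1}({}_T\mathcal{X})$ is trivial, and for the reverse I would use dimension shifting: every $V \in {}_T\mathcal{X}$ sits in a short exact sequence $0 \to V' \to T^0 \to V \to 0$ with $T^0 \in {\rm add}T$ and $V' \in {}_T\mathcal{X}$ (the first syzygy of the defining resolution), so ${\rm Ext}^1$-vanishing against all of ${}_T\mathcal{X}$ propagates to ${\rm Ext}^i$-vanishing. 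The reverse equality ${}_T\mathcal{X} = ({}^{\perp}({}_T\mathcal{X}))^{\perp_1}$ and the analogous equalities for $(\mathcal{X}_T, (\mathcal{X}_T)^{\perp})$ are handled the same way, and heredity is automatic since the left-hand classes are defined by ${}^{\perp}$ in all degrees. For the intersection ${}^{\perp}({}_T\mathcal{X}) \cap {}_T\mathcal{X} = {\rm add}T$, the inclusion $\supseteq$ is self-orthogonality of $T$, while for $\subseteq$ a module $M$ in the intersection admits $0 \to M' \to T^0 \to M \to 0$ with $M' \in {}_T\mathcal{X}$ and $T^0 \in {\rm add}T$; this splits since ${\rm Ext}^1_A(M,M')=0$, so $M \in {\rm add}T$. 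The inclusion $(\mathcal{X}_T)^{\perp} \subseteq {}_T\mathcal{X}$ is the genuinely non-formal point; I would take it from \cite{MR}, where it follows from the interplay between the two perpendicular classes together with the fact that $T \in \mathcal{X}_T$ forces every $Y \in (\mathcal{X}_T)^{\perp}$ to lie in $T^{\perp}$ and to be ${\rm add}T$-resolvable.

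For (3), I would use the adjoint pair $(T \otimes_B -, {\rm Hom}_A(T,-))$ together with the bimodule isomorphisms $\delta$ and $\delta'$. The claim is that the counit $\varepsilon$ is invertible on ${}_T\mathcal{X}$ and the unit $\eta$ is invertible on $\mathcal{X}_{DT}$, so that ${\rm Hom}_A(T,-)$ and $T\otimes_B -$ restrict to mutually inverse equivalences ${}_T\mathcal{X} \simeq \mathcal{X}_{DT}$. One checks invertibility first on ${\rm add}T \leftrightarrow {\rm add}({}_BB)$, where it reduces to the faithfully balanced condition together with $T\otimes_B B \cong T$ and ${\rm Hom}_A(T,T)\cong B$; then one propagates it along the defining resolutions using the closure properties and the vanishing of ${\rm Ext}^{\geq 1}_A(T,-)$ and ${\rm Tor}^A_{\geq 1}(DT,-)$ that hold on these classes. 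Since the equivalence is exact on these exact subcategories, it carries the defining ${\rm Ext}$-orthogonality conditions to their $B$-side counterparts, giving the restriction $(\mathcal{X}_T)^{\perp} \simeq {}^{\perp}({}_{DT}\mathcal{X})$. The second equivalence $\mathcal{X}_T \simeq {}_{DT}\mathcal{X}$ via $DT\otimes_A -$ and ${\rm Hom}_B(DT,-)$ is obtained identically, now using $\delta'$, and again cited to \cite{Wak1, MR}.

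The step I expect to be the main obstacle is (3): verifying that the adjunction restricts to honest equivalences on these particular subcategories and that it matches up the nested orthogonality classes. This is where the full strength of the Wakamatsu-tilting hypothesis enters, since one needs the unit and counit to be isomorphisms on \emph{infinite} ${\rm add}T$-(co)resolutions, controlled only by the vanishing conditions built into ${}_T\mathcal{X}$ and $\mathcal{X}_{DT}$ rather than by a finite projective dimension. The non-formal inclusion $(\mathcal{X}_T)^{\perp}\subseteq {}_T\mathcal{X}$ in (1) is the secondary difficulty, for the same reason: it requires extracting a genuine ${\rm add}T$-resolution from a mere ${\rm Ext}$-orthogonality hypothesis.
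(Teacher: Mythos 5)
Your overall architecture — prove (1), deduce (2) by applying (1) to the dual bimodule ${}_B(DT)_A$, and get (3) from the adjunction, with the hard facts cited to \cite{Wak1, MR} — matches the paper, which in fact proves this lemma purely by citation: \cite[Proposition 3.1]{MR} for (1), duality for (2), and \cite[Proposition 2.14]{Wak2} for (3). The trouble lies in the part you chose to argue yourself, namely the cotorsion-pair axioms for $({^\perp({_T\mathcal{X}})}, {_T\mathcal{X}})$, where there are two genuine gaps. The first is that your dimension shift runs in the wrong direction. From $0\rightarrow V'\rightarrow T^0\rightarrow V\rightarrow 0$ with $V', T^0\in {_T\mathcal{X}}$, the long exact sequence for ${\rm Hom}_A(M,-)$ reads
\[
\cdots\rightarrow {\rm Ext}^i_A(M,T^0)\rightarrow {\rm Ext}^i_A(M,V)\rightarrow {\rm Ext}^{i+1}_A(M,V')\rightarrow {\rm Ext}^{i+1}_A(M,T^0)\rightarrow\cdots,
\]
so a syzygy-type sequence in the second variable propagates vanishing \emph{downward} (from degree $i+1$ to degree $i$), never upward. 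Knowing ${\rm Ext}^1_A(M,{_T\mathcal{X}})=0$ gives no control on ${\rm Ext}^2_A(M,V)$: that group is caught between ${\rm Ext}^2_A(M,T^0)$ and ${\rm Ext}^3_A(M,V')$, which are exactly the groups you are trying to kill. The correct shift goes the other way: one first proves that all injective $A$-modules lie in ${_T\mathcal{X}}$ — a non-formal fact using the $B^{\rm op}$-side hypothesis, e.g. via $DA\simeq T\otimes_B DT$, a projective resolution of ${}_BDT$, and ${\rm Tor}^B_i(T,DT)\simeq D{\rm Ext}^i_{B^{\rm op}}(T,T)=0$ — and then uses closure of ${_T\mathcal{X}}$ under cokernels of monomorphisms (\cite[Proposition 5.1]{AR}) to get cosyzygy sequences $0\rightarrow V\rightarrow I\rightarrow V''\rightarrow 0$ inside ${_T\mathcal{X}}$ with $I$ injective, whence ${\rm Ext}^{i+1}_A(M,V)\simeq {\rm Ext}^i_A(M,V'')$ and induction applies.

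The second gap is the claim that the equality ${_T\mathcal{X}}=({^\perp({_T\mathcal{X}})})^{\perp_1}$ is ``handled the same way''. The inclusion $\subseteq$ is trivial, but the inclusion $\supseteq$ asks you to produce, for a module $Y$ that is merely ${\rm Ext}^1$-orthogonal to ${^\perp({_T\mathcal{X}})}$, an entire ${\rm add}T$-resolution of $Y$ with all cokernels in $T^\perp$. No dimension shift manufactures such a resolution; this is precisely the same non-formal difficulty that you correctly flag for $(\mathcal{X}_T)^\perp\subseteq {_T\mathcal{X}}$ and defer to \cite{MR}, and it is the actual content of \cite[Proposition 3.1]{MR}. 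Note also that completeness is explicitly unavailable here (the paper remarks these cotorsion pairs are in general not complete), so you cannot fall back on special precover/preenvelope arguments. The honest repair is to cite \cite[Proposition 3.1]{MR} for all of (1), exactly as the paper does. For the record, the pieces of your proposal that do work are: the deduction of (2) by duality (same as the paper), the computation of the intersection ${^\perp({_T\mathcal{X}})}\cap{_T\mathcal{X}}={\rm add}T$ by splitting the syzygy sequence, and the outline of (3), which is the standard unit/counit argument behind \cite[Proposition 2.14]{Wak2}.
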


In general, the above cotorsion pairs are not complete.

\begin{proof}
For (1), we refer to \cite[Proposition 3.1]{MR}, and (2) follows from (1) applied to the dual bimodule $_B(DT)_A$.   For (3), we refer to \cite[Proposition 2.14]{Wak2}.
\end{proof}

Following \cite{Wei}, a Wakamatsu-tilting bimodule $_AT_B$ is \emph{good} provided that there are  cotorsion pairs $(\mathcal{W}, \mathcal{V})$ in $A\mbox{-mod}$ and $(\mathcal{Y}, \mathcal{Z})$ in $B\mbox{-mod}$, respectively, which satisfy the following conditions.
\begin{enumerate}
\item[(GW1)] These two cotorsion pairs are complete hereditary.
\item[(GW2)] $\mathcal{W}\cap \mathcal{V}={\rm add} T$ and $\mathcal{Y}\cap \mathcal{Z}={\rm add} DT$.
\item[(GW3)] The adjoint pair $(T\otimes_B-, {\rm Hom}_A(T, -))$ induces an equivalence $\mathcal{V}\stackrel{\sim}\longrightarrow \mathcal{Y}$.
\item[(GW4)] The adjoint pair $(DT\otimes_A-, {\rm Hom}_B(DT, -))$ induces an equivalence $\mathcal{W}\stackrel{\sim}\longrightarrow \mathcal{Z}$.
\end{enumerate}
We mention that these conditions are essentially  given in \cite[Hypothesis 1.4]{Wak2}. In the above situation, we observe that $(\mathcal{X}_T)^\perp\subseteq \mathcal{V}\subseteq {_T\mathcal{X}}$. Indeed, one proves that $_AT$ is an Ext-projective generator for $\mathcal{V}$ and then applies \cite[Corollary 3.3]{MR}; also see \cite[Proposition 3.2.2]{Wei}.

In  the following example, we use the well-known fact: a cotorsion pair $(\mathcal{C}, \mathcal{D})$ is complete if and only if $\mathcal{D}$ is covariantly finite, if and only if $\mathcal{C}$ is contravariantly finite; see \cite[Proposition 1.9]{AR}.

\begin{exm}
{\rm Let $_AT_B$ be a Wakamatsu-tilting bimodule.

(1) If both $_AT$ and $T_B$ have finite projective dimension, then $_AT_B$ is called a \emph{tilting bimodule}. This coincides with the tilting module of finite projective dimension in \cite{Miy,CPS, Hap}. In this case, the cotorsion pairs $(^\perp({_T\mathcal{X}}), {_T\mathcal{X}})$ and $(\mathcal{X}_{DT}, (\mathcal{X}_{DT})^\perp)$ are complete; see \cite[Theorem 2.17]{Wak2}. In this case, we have ${_T\mathcal{X}}=T^\perp$; see \cite[Theorem 5.4]{AR}. Hence, by Lemma \ref{lem:Wak} a tilting bimodule is  a good Wakamatsu-tilting bimodule.

If both $_AT$ and $T_B$ have finite injective dimension, then $_AT_B$ is called a \emph{cotilting bimodule}. By duality, we observe that a cotilting  bimodule is a good Wakamatsu-tilting bimodule.

(2) Following \cite{Wei}, the Wakamatsu-tilting bimodule $_AT_B$ is said to be of \emph{finite type}, if either the subcategory ${^\perp({_T\mathcal{X}})}$ or $(\mathcal{X}_T)^\perp$ of $A\mbox{-{\rm mod}}$ is finite. This happens when $A$ or $B$ is of finite representation type; for an explicit example, see \cite[Example 3.1]{Wak2}. Then a Wakamatsu-tilting bimodule of finite type is good.

Indeed, if ${^\perp({_T\mathcal{X}})}$ is finite, so is $(\mathcal{X}_{DT})^\perp$ by Lemma \ref{lem:Wak}(3). Then both cotorsion pairs $(^\perp({_T\mathcal{X}}), {_T\mathcal{X}})$ and $(\mathcal{X}_{DT}, (\mathcal{X}_{DT})^\perp)$ are complete. Similar argument applies if $(\mathcal{X}_T)^\perp$ is finite.}
\end{exm}

We now reformulate Wakamatsu's equivalence as follows, which combines \cite[Theorem 1.5]{Wak2} and Theorem \ref{thm:main}.

\begin{thm} {\rm (Wakamatsu)}\label{thm:Wak}
Let $_AT_B$ be a good Wakamatsu-tilting bimodule with the relevant cotorsion pairs $(\mathcal{W}, \mathcal{V})$ and $(\mathcal{Y}, \mathcal{Z})$ as above. Then the Wakamatsu's functor $$\mathcal{S}\colon T(A)\mbox{-\underline{\rm mod}}\longrightarrow T(B)\mbox{-\underline{\rm mod}}$$
 associated to $(\mathcal{W}, \mathcal{V})$ is a triangle equivalence. \hfill $\square$
\end{thm}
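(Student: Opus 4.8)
The plan is to assemble the statement from the two ingredients now at our disposal: Theorem~\ref{thm:main}, which supplies the triangulated structure, and Wakamatsu's original equivalence \cite[Theorem 1.5]{Wak2}, which supplies the bijectivity on objects and morphisms. The first thing I would do is check that the hypotheses of Theorem~\ref{thm:main} are satisfied in the good Wakamatsu-tilting setting. Since $_AT_B$ is Wakamatsu-tilting, it is in particular faithfully balanced; and by (GW1) together with (GW2), the cotorsion pair $(\mathcal{W},\mathcal{V})$ in $A\mbox{-mod}$ is complete hereditary with $\mathcal{W}\cap\mathcal{V}={\rm add}T$. These are exactly the standing assumptions of Theorem~\ref{thm:main}.

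Consequently Theorem~\ref{thm:main} applies directly and tells us that Wakamatsu's functor $\mathcal{S}$ is a $\partial$-functor. Since $\mathcal{S}$ moreover vanishes on projective $T(A)$-modules by \cite[Lemma 1.3]{Wak2}, Lemma~\ref{lem:par} shows that it descends to a triangle functor $\mathcal{S}\colon T(A)\mbox{-\underline{\rm mod}}\to T(B)\mbox{-\underline{\rm mod}}$; the translation functors are the stable-category shifts $\Sigma$, and the connecting morphisms $\omega_\xi$ constructed in the proof of Theorem~\ref{thm:main} furnish the required compatibility with exact triangles. Thus the triangle-functor half of the assertion is entirely absorbed by the previous section.

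Next I would invoke \cite[Theorem 1.5]{Wak2}, where Wakamatsu proves that, under precisely the good-bimodule hypotheses (GW1)--(GW4), the induced functor $\mathcal{S}\colon T(A)\mbox{-\underline{\rm mod}}\to T(B)\mbox{-\underline{\rm mod}}$ is an equivalence of additive categories. The concluding step is then the general observation that a triangle functor between triangulated categories which happens to be an equivalence of the underlying additive categories is automatically a triangle equivalence: the quasi-inverse $\mathcal{S}^{-1}$ inherits a canonical triangle structure, and the unit and counit become isomorphisms of triangle functors. Combining this with the two displayed facts yields that $\mathcal{S}$ is a triangle equivalence.

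I expect that the only point requiring a word of care---rather than a genuine obstacle---will be this last step, namely promoting the quasi-inverse $\mathcal{S}^{-1}$ to a triangle functor. This is standard: the natural isomorphism $\mathcal{S}\Sigma\cong\Sigma\mathcal{S}$ transports along $\mathcal{S}^{-1}$ to a natural isomorphism $\Sigma\mathcal{S}^{-1}\cong\mathcal{S}^{-1}\Sigma$, and exact triangles are reflected because $\mathcal{S}$ both preserves and, being an equivalence, detects them. Since Theorem~\ref{thm:main} has front-loaded all the substantive verification, the present theorem is really just the formal synthesis of a triangle functor with an already-known equivalence.
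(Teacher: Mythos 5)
Your proposal is correct and matches the paper's own reasoning: the theorem is stated there precisely as the combination of \cite[Theorem 1.5]{Wak2} (additive equivalence under (GW1)--(GW4)) with Theorem~\ref{thm:main} (triangle functor, whose hypotheses you rightly check via faithful balancedness, (GW1) and (GW2)), finished off by the standard fact that a triangle functor which is an equivalence of underlying categories is a triangle equivalence.
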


\begin{rem}
We keep the assumptions in Theorem \ref{thm:Wak}.

(1) If the given Wakamatsu-tilting bimodule $_AT_B$ is tilting, there is a triangle equivalence between $T(A)\mbox{-\underline{\rm mod}}$ and $T(B)\mbox{-\underline{\rm mod}}$  obtained in \cite[Theorem 3.1]{Ric}. It would be of interest to compare these two triangle equivalences. If the tilting module has projective dimension at most one, these two triangle equivalences might coincide in view of \cite[Theorem 8]{TW2}.

(2) Consider the category $T(A)\mbox{-Mod}$ of arbitrary $T(A)$-modules. Using filtered colimits and \cite[Theorem 2.4]{KS}, we obtain a cotorsion pair in $T(A)\mbox{-Mod}$ and a cotorsion pair in $T(B)\mbox{-Mod}$, which still satisfy (GW1)-(GW4). Here, we have to replace ``add" by ``Add" in (GW2).  Then we obtain a triangle functor $$\mathcal{S}\colon T(A)\mbox{-\underline{\rm Mod}}\longrightarrow T(B)\mbox{-\underline{\rm Mod}},$$
 which is an equivalence  by Theorem \ref{thm:Wak} and  infinite d\'{e}vissage.

(3) We view $T(A)=A\oplus DA$ as a $\mathbb{Z}$-graded algebra with ${\rm deg} A=0$ and ${\rm deg} DA=1$. Then the category $T(A)\mbox{-gr}$ of graded $T(A)$-modules is equivalent to the module category of the repetitive algebra of $A$; in particular,  it is a Frobenius category. By \cite[Theorem II.4.9]{Hap}, there is a triangle full embedding from the bounded derived category $\mathbf{D}^b(A\mbox{-mod})$  of $A\mbox{-mod}$ to the stable category $T(A)\mbox{-\underline{gr}}$.

A graded $T(A)$-module $(X, \phi)$ consists of a graded $A$-module $X$ with a structure map $\phi\colon DA\otimes X\rightarrow X$ of degree one, which satisfies $\phi\circ (DA\otimes \phi)=0$. Then a parallel argument as in \cite[Section 1]{Wak2} carries over to graded modules, and thus we obtain a triangle equivalence
$$\mathcal{S}\colon T(A)\mbox{-\underline{gr}}\longrightarrow T(B)\mbox{-\underline{gr}}.$$
 The construction of $\mathcal{S}$ is similar to the one in \cite[Section 2]{Wak4}, where the grading shift by one appears naturally. For the details, we refer to  \cite[Section 4]{Wei}.  We might  call the above equivalence $\mathcal{S}$ a \emph{repetitive equivalence} between the algebras $A$ and $B$.  It seems that a good Wakamatsu-tilting module plays a similar role for repetitive equivalence as a tilting module for derived equivalences.

 We observe that the above repetitive equivalence $\mathcal{S}$ usually will not restrict to a derived equivalence, that is, an equivalence between $\mathbf{D}^b(A\mbox{-mod})$ and $\mathbf{D}^b(B\mbox{-mod})$; see the explicit example in \cite[Examples 3.1 and 3.2]{Wak2}, where the two algebras are not derived equivalent.
\end{rem}

\bibliography{}

\begin{thebibliography}{9999}
\bibitem{AI} {\sc I. Assem, Y. Iwanaga}, {\em Stable equivalence of representation-finite trivial extension}, J. Algebra {\bf 102} (1986), 33--38.


\bibitem{AR} {\sc M. Auslander, I. Reiten}, {\em Applications of contravariantly finite subcategories}, Adv. Math. {\bf 86} (1991), 111--152.


\bibitem{Chen} {\sc X.W. Chen}, {\em Relative singularity categories and Gorenstein-projective modules}, Math. Nachr. {\bf 284}(2-3) (2011), 199--212.

\bibitem{CPS} {\sc E. Cline, B. Parshall, L.L. Scott}, {\em Derived categories and Morita theory}, J. Algebra {\bf 104} (1986), 397--409.

\bibitem{GT} {\sc R. Gobel, J. Trlifaj}, Approximations and Endomorphism Algebras of Modules, De Gruyter Expo. Math. {\bf 41}, Walter de Gruyter, Berlin/New York, 2006.


\bibitem{Hap} {\sc D. Happel,} Triangulated Categories in the Representation Theory of Finite Dimensional Algebras, London Math. Soc., Lecture Notes Ser. {\bf 119}, Cambridge Univ. Press, Cambridge, 1988.


\bibitem{Kel} {\sc B. Keller}, {\em Derived categories and universal problems}, Comm. Algebra {\bf 19} (1991), 699--747.

\bibitem{KS} {\sc H. Krause, O. Solberg}, {\em Applications of cotorsion pairs}, J. London Math. Soc. (2) {\bf 68} (2003), 631--650.

\bibitem{MR} {\sc F. Mantese, I. Reiten}, {\em Wakamatsu tilting modules}, J. Algebra {\bf 278} (2004), 532--552.

\bibitem{Miy} {\sc Y. Miyashita}, {\em Tilting modules of finite projective dimension}, Math. Z. {\bf 193} (1986), 113--146.

\bibitem{Ric} {\sc J. Rickard}, {\em Derived equivalences and stable equivalences}, J. Pure Appl. Algebra {\bf
        61} (1989), 303--317.


\bibitem{Ta1} {\sc H. Tachikawa}, {\em Selfinjective algebras and tilting theory}, Lecture Notes Math. {\bf 1177}, 272--307, Springer-Verlag, New York/Berlin, 1986.


\bibitem{Ta} {\sc H. Tachikawa}, {\em Reflection functors and Auslander-Reiten translations for trivial extensions of hereditary algebras}, J. Algebra {\bf 90} (1984), 98--118.


\bibitem{TW2} {\sc H. Tachikawa, T. Wakamatsu}, {\em Applications of reflection functors for self-injecrive algebras}, Lecture Notes Math. {\bf 1177}, 308--327, Springer-Verlag, New York/Berlin, 1986.


\bibitem{TW1} {\sc H. Tachikawa, T. Wakamatsu}, {\em Tilting functors and stable equivalences for self-injective algebras}, J. Algebra {\bf 109} (1987), 138--165.




\bibitem{Wak3} {\sc T. Wakamatsu}, {\em Partial Coxeter functors of selfinjective algebras}, Tsukuba J. Math. {\bf 9} (1985), 171--183.

\bibitem{Wak4} {\sc T. Wakamatsu}, {\em Stable equivalence between universal covers of trivial extension self-injective algebras}, Tsukuba J. Math. {\bf 9} (1985), 299--316.



\bibitem{Wak1} {\sc T. Wakamatsu}, {\em On modules with trivial self-extensions}, J. Algebra {\bf 114} (1988), 106--114.


\bibitem{Wak2} {\sc T. Wakamatsu}, {\em Stable equivalence for self-injective algebras and a generalization of tilting modules}, J. Algebra {\bf 134} (1990), 298--325.


\bibitem{Wei} {\sc J. Wei}, {\em Repetitive equivalences and Wakamatsu-tilting modules}, arXiv:1601.0139v1.


\end{thebibliography}

\vskip 10pt

 {\footnotesize \noindent Xiao-Wu Chen\\
 Key Laboratory of Wu Wen-Tsun Mathematics, Chinese Academy of Sciences,\\
 School of Mathematical Sciences, University of Science and Technology of China, Hefei 230026, Anhui, PR China}

 \vskip 5pt

 {\footnotesize \noindent Jiaqun Wei\\
  Institute of Mathematics, School of Mathematical Sciences, Nanjing Normal University, Nanjing 210023, Jiangsu, PR China.}

\end{document}